\theoremstyle{plain}       
\newtheorem{theorem}{Theorem}[section]
\newtheorem{lemma}[theorem]{Lemma}
\newtheorem{definition}[theorem]{Definition}
\newtheorem{proposition}[theorem]{Proposition}
\newtheorem{conjecture}[theorem]{Conjecture}
\renewenvironment{equation} 
  {\begin{equation*}}
  {\end{equation*}}
\DeclareMathOperator{\prank}{prank}
\DeclareMathOperator{\arank}{arank}
\DeclareMathOperator{\bias}{bias}
\DeclareMathOperator{\im}{im}
\DeclareMathOperator{\Span}{Span}
\newcommand\F{\mathbb{F}}
\begin{document}
\vspace{10mm}

\title[Restricted Random Set Progressions]
    {A note on lower bounds in Szemer\'edi's theorem with random differences}
\author{Jason Zheng}
\address{University of Michigan}
\email{jkzheng@umich.edu}
\date{Aug 2, 2025}

\begin{abstract}
In this note, we consider Szemer\'{e}di's theorem on $k$-term arithmetic progressions over finite fields $\mathbb{F}_p^n$, where the allowed set $S$ of common differences in these progressions is chosen randomly of fixed size. Combining a generalization of an argument of Altman with Moshkovitz--Zhu's bounds for the partition rank of a tensor in terms of its analytic rank, we (slightly) improve the best known lower bounds (due to Bri\"et) on the size $|S|$ required for Szemer\'edi's theorem with difference in $S$ to hold asymptotically almost surely. 
\end{abstract}

\maketitle

\section{Introduction}
 In 1975, Szemer\'{e}di \cite{szemThm} famously proved that dense subsets of the integers contain $k$-term arithmetic progressions. Where Szemer\'edi's theorem allows any common difference $d$ in the arithmetic progression $(x,x+d, \dots, x+(k-1)d)$, there is substantial interest in determining sets $S$ for which Szemer\'edi's theorem holds with the additional restriction that $d\in S$. Szemer\'edi's theorem says that setting $S= \mathbb{N}$ suffices. In this paper we are interested in understanding how sparse $S$ can be for such a statement to hold. In particular, we are interested in determining the minimal density of a random $S$ such that $S$ satisfies this property with high probability. Frantzikinakis, Lesigne, and Weirdl \cite{frantzConj} provide the following interesting conjecture. The asymptotic notation $\omega$ that is used here satisfies that $f=o(g)$ if and only if $g=\omega(f)$. 
 \begin{conjecture}
     Let $S \subset \mathbb{N}$ be chosen at random with $\mathbb{P}[d \in S] = \omega(\frac{1}{d})$. Then asymptotically almost surely, all subsets of $\mathbb{N}$ with positive upper density contain a $k$-term arithmetic progression with common difference in $S$.
 \end{conjecture}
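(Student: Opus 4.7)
The overall strategy is to combine a Varnavides-style counting for $k$-APs with a Borel--Cantelli argument on the random set $S$. Fix $A \subseteq \mathbb{N}$ with upper density $\delta > 0$. By the quantitative form of Szemer\'edi's theorem, at every scale $N$ with $|A \cap [N]| \ge \delta N$ the number of $k$-APs in $A \cap [N]$ is at least $c(\delta,k)\,N^2$. Averaging over common differences yields a ``good'' set
\[
G_N(A) = \bigl\{ d : A \cap [N] \text{ contains at least } \tfrac{c}{2} N \text{ $k$-APs with common difference } d\bigr\}
\]
of cardinality $\gg_{k,\delta} N$ for each such $N$. The conjecture reduces to the statement that, almost surely, for every positive-density $A$ one has $S \cap G_N(A) \neq \emptyset$ for some (equivalently, infinitely many) $N$. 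For a \emph{single} $A$, this is immediate from the second Borel--Cantelli lemma applied along a sparse sequence of scales $N_i$ chosen so that the restrictions of the good sets $G_{N_i}(A)$ to disjoint dyadic intervals still have size $\gg N_i$; under the hypothesis $\mathbb{P}[d \in S] = \omega(1/d)$ one has $\sum_{d \in G_{N_i}} \mathbb{P}[d \in S] = \omega(1)$, which forces infinite occurrence with probability one.

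The first serious step is handling the quantifier ``for every $A$'', since the family of positive-density subsets of $\mathbb{N}$ is uncountable and $G_N(A)$ depends intricately on $A$. I would approach this through the arithmetic regularity lemma of Green--Tao: at each scale $N$, decompose $\mathbf{1}_A$ as a bounded-complexity nilsequence plus an error small in the Gowers $U^{k-1}$-norm. By the generalized von~Neumann theorem, the Gowers-uniform error contributes negligibly to the AP count, so the number of $k$-APs in $A \cap [N]$ with difference in $S$ is governed by evaluations of structured nilsequences. Since nilsequences of complexity at most $K$ form an essentially countable family up to small perturbation, a single Borel--Cantelli over that catalogue should in principle suffice to handle all positive-density $A$ uniformly.

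The second step is a probabilistic concentration statement: the reweighted random measure $d \mapsto \mathbf{1}_S(d)/\mathbb{P}[d \in S]$, summed against nilsequence values on AP configurations inside $A$, should concentrate around its mean. I would attempt this through high-moment estimates --- a Rosenthal or Bennett inequality applied to the sum over $d$ in a dyadic block, exploiting that the indicators $\mathbf{1}_S(d)$ are independent Bernoullis of sharply decaying variance. A union bound over the nilsequence catalogue at scale $N$ would then convert the pointwise concentration into a statement that holds simultaneously for every positive-density $A$ on a probability-one event.

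The main obstacle, and the reason the conjecture is still open, lies in the sharpness of the threshold $\omega(1/d)$. At this density, $|S \cap [N]| = \omega(\log N)$ only, so the variance of a weighted sum $\sum_{d \in S \cap [N]} \Lambda(d)$ against a nilsequence $\Lambda$ barely beats the mean, and a union bound over the nilmanifold catalogue costs further polylogarithmic factors that presently defeat the argument. The existing results of Bri\"et, Christ--Durcik--Fraser--Kuca, and their predecessors require $\mathbb{P}[d \in S] \gg (\log d)^{C_k}/d$ precisely because of these losses, and closing the remaining $(\log d)^{C_k}$ gap seems to demand either a sharper inverse theorem tuned to sparse random sets, or a bypass of arithmetic regularity via direct higher-order Fourier control of $\mathbf{1}_S$ at criticality --- either of which would be a substantial new contribution beyond what the present note can offer.
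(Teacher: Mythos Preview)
The statement you are attempting to prove is recorded in the paper as a \emph{conjecture}, attributed to Frantzikinakis--Lesigne--Wierdl, and the paper does not prove it; it is cited only as motivation. So there is no paper proof to compare your proposal against, and any valid proof would resolve an open problem.

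Your write-up is not a proof but a heuristic roadmap, and you concede as much in your final paragraph. The genuine gap is precisely where you locate it: the passage from a single fixed $A$ (where Borel--Cantelli is immediate) to the universal quantifier over all positive-density $A$. The proposed fix --- arithmetic regularity to reduce to an essentially countable catalogue of bounded-complexity nilsequences, followed by concentration plus a union bound --- does not close at the conjectured threshold. With $\mathbb{P}[d\in S]=\omega(1/d)$ one has only $|S\cap[N]|=\omega(\log N)$, so the variance of the reweighted sums is of the same order as the mean, and the union bound over the nilsequence catalogue costs further polylogarithmic factors that the concentration inequalities cannot absorb. This is exactly why the partial results you cite require the stronger hypothesis $\mathbb{P}[d\in S]\gg(\log d)^{C_k}/d$. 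In short, the sketch is a fair account of the difficulty, but it is not a proof, and none is currently known.
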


 We direct the reader to \cite[Chapter~11]{taoVu} for more on this problem in the integers, but remark that in this setting, asymptotics for $|S\cap \{1,\ldots, N\}|$ are known only for $k=2$. One may ask a similar question in any finite abelian group. A setting of particular interest in additive combinatorics and related fields is the finite field model setting $\mathbb{F}_p^n$ ($p$ fixed, $n$ large) \cite{benGreenExposition}.

  In this setting, we form $S\subset \mathbb{F}_p^n$ by including elements with equal probability. Our argument is inspired by \cite{dan}, who showed a lower bound of $\binom{n+1}{2} - Cn\log_pn$ in the case of $k = 3$ by considering certain vector spaces of matrices and bounds on matrix rank. In 2021, \cite{briet} generalized the argument of \cite{dan} and together with a new ingredient on subspaces of tensors possessing high analytic rank, showed a lower bound of $\binom{n + k - 2}{k - 1} - C(\log_pn)^2n^{k-2}$. We show in this paper that one may instead more directly generalize the argument of \cite{dan} and use bounds between the analytic and partition rank of tensors. In doing so, we obtain that random sets $S$ of size $\binom{n + k - 2}{k - 1} - C(\log_pn)^{1 + \epsilon}n^{k-2}$  yield (with high probability) dense subsets of $\mathbb{F}_p^n$ with no $k$-APs with common difference in $S$, thus slightly improving on the lower bound from \cite{briet}. We note that this improvement relies upon improved bounds \cite{MZ} relating the analytic rank of a tensor to its partition rank, which were not available at the time of writing of \cite{briet}.

We now formally state our main result.
\begin{theorem} \label{thm:Big Theorem}
    For every integer $k \geq 3$, prime $p \geq k$, there is a constant $C_{p,k}>0$ and a function $\epsilon: \mathbb{R}^+ \to \mathbb{R}^+$ with  $\epsilon(x)\to 0$ as $x\to \infty$ such that the following holds. If $S \subset \mathbb{F}_p^n$ is a set formed by selecting at most 
    \begin{equation}
        \binom{n+k-2}{k-1} - C_{p,k}(\log_pn)^{1 + \epsilon(n)}n^{k - 2}
    \end{equation}elements independently and uniformly at random, then with probability $1 - o_{n\to\infty}(1)$ there is a set $A \subset \mathbb{F}_p^n$ of size $|A| \geq \Omega_{k,p}(p^n)$ that contains no proper $k$-term arithmetic progression with common difference in $S$.
\end{theorem}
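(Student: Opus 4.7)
The plan is to generalize Altman's construction: produce $A$ as a level set of a suitable homogeneous polynomial $P$ of degree $k-1$. Let $V$ denote the space of such polynomials, with $D := \dim V = \binom{n+k-2}{k-1}$, and for each $d \in \F_p^n$ let $\phi_d \in V^*$ be evaluation at $d$. Suppose $P \in V$ satisfies $P(d) \neq 0$ for every $d \in S$ (we may assume $0 \notin S$, as this holds with probability $1 - o(1)$ given the size constraint on $S$). For any $a \in \F_p$ set $A = P^{-1}(a)$. If some proper $k$-AP $x, x+d, \ldots, x+(k-1)d$ lies in $A$ with $d \in S$, then by the finite-difference identity
\begin{equation}
(k-1)!\,P(d) \;=\; \sum_{j=0}^{k-1}(-1)^{k-1-j}\binom{k-1}{j}P(x+jd) \;=\; 0,
\end{equation}
where the first equality uses that $\Delta_d^{k-1}$ sends the homogeneous degree-$(k-1)$ polynomial $P$ to the constant $(k-1)!\cdot P(d)$, and the second uses that each $P(x+jd)=a$ together with $\sum_j(-1)^{k-1-j}\binom{k-1}{j}=0$. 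Since $p \geq k$, $(k-1)!$ is a unit in $\F_p$, forcing $P(d) = 0$ --- a contradiction. Pigeonholing over $a$ yields $|A| \geq p^{n-1} = \Omega_{k,p}(p^n)$.

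It remains to show that for random $S$ of size $m \leq D - C_{p,k}(\log_p n)^{1+\epsilon(n)} n^{k-2}$, some $P \in V$ satisfies $P(d) \neq 0$ for every $d \in S$ with probability $1 - o(1)$ --- equivalently, the hyperplanes $\ker \phi_d$ do not cover $V$. A clean sufficient condition is that $\{\phi_d\}_{d \in S}$ be linearly independent in $V^*$: then the joint evaluation map $V \to \F_p^S$ is surjective, and some $P$ maps to $(1, \ldots, 1)$. To establish independence w.h.p., select $S = \{d_1, \ldots, d_m\}$ sequentially and bound the collision probability $\Pr[\phi_{d_j} \in W_{j-1}]$, where $W_{j-1} = \Span\{\phi_{d_1}, \ldots, \phi_{d_{j-1}}\}$. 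Under $V^* \cong \Sym^{k-1}(\F_p^n)$, the functional $\phi_d$ corresponds to the rank-$1$ symmetric tensor $d^{\otimes(k-1)}$, so the collision probability equals $|Z_{j-1}|/p^n$, where $Z_{j-1} = \{d \in \F_p^n : d^{\otimes(k-1)} \in W_{j-1}\}$ is the Veronese preimage of $W_{j-1}$.

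The technical crux is a uniform upper bound on $|Z_{j-1}|$. For a "generic" subspace $W_{j-1}$ one expects $|Z_{j-1}| \ll p^{n-(D-(j-1))}$, and summing these contributions over $j$ costs only a logarithmic loss. The bound fails exactly when the annihilator $W_{j-1}^\perp \subset V$ contains a highly biased polynomial --- equivalently, a symmetric $(k-1)$-tensor of low analytic rank. Here I invoke the Moshkovitz--Zhu theorem to convert low analytic rank into low partition rank with only a polylogarithmic overhead (strictly sharper than the bound used in \cite{briet}); since low-partition-rank tensors form a small set, a union bound over such "exceptional" subspaces absorbs the loss. The correction term $(\log_p n)^{1+\epsilon(n)} n^{k-2}$ is precisely the resulting slack: the factor $n^{k-2}$ reflects the dimension of the low-partition-rank slice of $\Sym^{k-1}(\F_p^n)$, while $(\log_p n)^{1+\epsilon(n)}$ comes directly from the Moshkovitz--Zhu overhead, improving on the $(\log_p n)^2$ that appears in \cite{briet}.

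The main obstacle is exactly this uniform bound on $|Z_{j-1}|$: translating a large Veronese preimage into a low-partition-rank witness in the annihilator, and carefully balancing the union bound across partition-rank levels against the probability budget of the sequential selection. Harnessing the sharpened Moshkovitz--Zhu estimate at this step is what produces the improved exponent.
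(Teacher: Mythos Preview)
Your proposal is essentially the paper's argument: reduce to linear independence of the Veronese images $\phi_{k-1}(d)$ for $d\in S$, bound the probability that a uniform point lands in a worst-case $(s-1)$-dimensional subspace $U$ via $\mathbb{E}_{T\in U^\perp}\bigl[p^{-\arank(T)/2^{d-1}}\bigr]$, and split this expectation by an analytic-rank threshold, invoking Moshkovitz--Zhu together with the count $p^{2n^{d-1}r}$ of low-partition-rank tensors. The only differences are cosmetic (your level-set-plus-pigeonhole in place of the paper's zero-set-plus-Chevalley--Warning), though note that the final step is not really a ``union bound over exceptional subspaces'' but a direct expectation bound over the annihilator --- the low-partition-rank tensors are not avoided, they are merely a small \emph{fraction} of $U^\perp$ --- and when you write up the details you should phrase it that way.
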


\subsection*{Acknowledgement}
    Part of this work was conducted as part of the 2024 University of Michigan REU, and the author is grateful for its support. The author also thanks Dan Altman for his guidance throughout the project, and Sarah Peluse and Nathan Tung for feedback on an earlier version of this paper.

\section{On ranks of tensors}
    Before we prove Theorem \ref{thm:Big Theorem}, this section establishes our preliminary facts and definitions regarding tensors. In the following we use the terminology $d$-tensor and $d$-linear form interchangeably. We also pass liberally between interpreting a $d$-tensor as a multilinear form and as the corresponding $d$-dimension box of coefficients defining it. 
    
  \begin{definition} [Partition rank]
    Let $d \geq 2$, $n \geq 1$ be integers. A $d$-linear form $T: \mathbb{F}^n \times \dots \times \mathbb{F}^n \rightarrow \mathbb{F}$ has partition rank $1$ if there exist integers $1 \leq a, b \leq d - 1$ such that $a+b = d$, a partition $\{ i_1, \dots, i_a \}, \{ j_1, \dots, j_b \}$ of $[d]$, and $a,b$-linear forms $T_1, T_2$ (respectively) such that for any $x_1, \dots, x_d \in \mathbb{F}^n$, 
    \begin{equation}
        T(x_1, \dots, x_d) = T_1(x_{i_1}, \dots, x_{i_a})T_2(x_{j_1}, \dots, x_{j_b}).
    \end{equation}
    The partition rank of $T$, denoted $\prank(T)$, is the smallest $r$ such that $T$ can be expressed as $T = T_1 + \dots + T_r$, where each $T_i$ has partition rank $1$.
\end{definition}

\begin{lemma} \label{lem:low prank bound}
    For integers $n\geq d \geq 3$, the number of $d$-tensors on $\mathbb{F}_p^n$ of partition rank at most $r$ is at most $p^{2n^{d-1}r}$.
\end{lemma}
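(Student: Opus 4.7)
The plan is to bound the number of $d$-tensors of partition rank at most $r$ by parametrizing each as a sum of $r$ partition-rank-$\le 1$ pieces and counting such representations.

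First, I would count the $d$-tensors $T$ on $\mathbb{F}_p^n$ of partition rank at most $1$ via the parametrization $T = T_1 T_2$ coming from the definition. Such a representation is specified by an ordered partition of $[d]$ into two non-empty blocks of sizes $a$ and $b = d - a$, together with an $a$-linear form $T_1$ and a $b$-linear form $T_2$ on $\mathbb{F}_p^n$ (the zero tensor arising, e.g., by taking $T_1 = 0$). There are at most $2^d$ ordered partitions, and for each the pair $(T_1, T_2)$ ranges over a set of size $p^{n^a + n^b}$. Since $1 \le a, b \le d - 1$, the quantity $n^a + n^b$ is maximized at $(a,b) = (1, d-1)$, giving $n^a + n^b \le n + n^{d-1}$. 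Combining,
\begin{equation}
\#\{T : \prank(T) \le 1\} \le 2^d \cdot p^{n + n^{d-1}} \le p^{d + n + n^{d-1}}.
\end{equation}
Under the hypothesis $n \ge d \ge 3$ one has $n^{d-1} \ge n^2 \ge 3n \ge n + d$, so $d + n + n^{d-1} \le 2 n^{d-1}$, yielding the clean bound $\#\{T : \prank(T) \le 1\} \le p^{2 n^{d-1}}$.

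Second, by the definition of partition rank, any tensor $T$ with $\prank(T) \le r$ decomposes as $T = U_1 + \cdots + U_r$ with each $U_i$ of partition rank at most $1$ (allowing $U_i = 0$ so that exactly $r$ summands appear). The map sending an ordered $r$-tuple $(U_1,\ldots,U_r)$ of rank-$\le 1$ tensors to its sum is therefore surjective onto $\{T : \prank(T) \le r\}$, so
\begin{equation}
\#\{T : \prank(T) \le r\} \le \bigl(\#\{T : \prank(T) \le 1\}\bigr)^r \le \bigl(p^{2 n^{d-1}}\bigr)^r = p^{2 n^{d-1} r},
\end{equation}
which is the claim.

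I do not anticipate a substantive obstacle: this is essentially a combinatorial counting exercise. The only care required is in absorbing the small additive terms $d + n$ and the multiplicative factor $2^d$ into the stated exponent $2 n^{d-1} r$, and this works cleanly because $n \ge d \ge 3$ forces $n^{d-1}$ to dominate both.
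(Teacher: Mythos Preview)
Your proof is correct and follows essentially the same route as the paper's: bound the number of partition-rank-$\le 1$ tensors by $p^{2n^{d-1}}$ via the parametrization $(A,T_1,T_2)\mapsto T_1T_2$ and the inequality $n^a+n^{d-a}\le n+n^{d-1}$, then raise to the $r$th power. Your handling is arguably slightly cleaner in counting rank $\le 1$ rather than rank exactly $1$ (so the zero tensor is included automatically) and in bounding the number of partitions directly by $2^d$, but the argument is the same.
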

\begin{proof}
    For a natural number $d$, let $f(d)$ denote the number of $d$-tensors on $\mathbb{F}_p^n$ and let $g(d)$ denote the number of $d$-tensors of partition rank exactly $1$.
    Counting choices for the value of $a$ and the input set of $T_1$, we obtain the bound 
    \begin{equation}
        g(d) \leq \sum_{a = 1}^{d-1} f(a) f(d - a) \binom{d}{a}.
    \end{equation}Note that $f(x) = p^{n^x}$, and that $n^a + n^{d - a} \leq n + n^{d-1}$, to obtain the bound
    \begin{equation}
        g(d) \leq 2^d \sum_{a = 1}^{d-1} p^{n^a + n^{d - a}} \leq dp^dp^{n+n^{d-1}} \leq p^{2n^{d-1}}.
    \end{equation}Now, we recall that any $d$-tensor $T$ of partition rank at most $r$ can be expressed as $T = T_1 + \dots + T_r$ for partition rank-$1$ tensors $T_i$. Thus, the total number of these tensors is at most $g(d)^r \leq p^{2n^{d-1}r}$.
\end{proof}
We also use a more analytic notion of tensor rank, introduced by Gowers and Wolf in \cite{GW11}.
\begin{definition} [Bias and analytic rank]
    Let $d \geq 2$, $n \geq 1$ be integers. Let $\mathbb{F}$ be a finite field and let $\mathbb{\chi}: \mathbb{F} \rightarrow \mathbb{C}$ be a nontrivial additive character. Let $T \in \mathbb{F}^{n \times \dots \times n}$ be a $d$-tensor. Then, the bias of $T$ is defined by 
    \begin{equation}
        \bias(T) = \mathbb{E}_{x_1, \dots, x_d \in \mathbb{F}^n}\chi(T(x_1, \dots, x_d)),
    \end{equation}
    and the analytic rank of $T$, denoted $\arank(T)$, is defined by 
    \begin{equation}
        \arank(T) = -\log_{|\mathbb{F}|}\bias(T).
    \end{equation}
\end{definition}

It is true but not trivial that these notions of rank are quite closely related.

 \begin{proposition} [\cite{kazhdan2018appoximatecohomology}, \cite{lovett2019}]  \label{prop: arank leq prank} 
     For any $d$-tensor $T$, $\arank{T} \leq \prank{T}$.
 \end{proposition}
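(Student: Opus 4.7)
The plan is to prove the equivalent bias lower bound $\bias(T) \geq p^{-\prank(T)}$. The first ingredient is a slice characterization of bias: by integrating out $x_1$ and using the orthogonality identity $\mathbb{E}_{x_1}\chi(L(x_1)) = \mathbf{1}[L = 0]$, valid for any linear form $L:\mathbb{F}_p^n\to\mathbb{F}_p$, one obtains
\[
\bias(T) = \Pr_{x_2, \ldots, x_d}\bigl[T(\cdot, x_2, \ldots, x_d) \equiv 0\bigr],
\]
so $\bias(T)$ is a non-negative real number in $[0, 1]$. This also dispatches the base case: for a partition-rank-one tensor $L = A(x_I) B(x_J)$ with (say) $1 \in I$, the $x_1$-slice factors as $B(x_J) \cdot A(\cdot, x_{I\setminus\{1\}})$, which vanishes whenever $B(x_J) = 0$. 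A further slicing of $B$ along any coordinate of $J$, using the same orthogonality, shows $\Pr[B(x_J) = 0] \geq 1/p$ for any multilinear form $B$, and hence $\bias(L) \geq 1/p$.

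The second, central ingredient is the submultiplicative estimate
\[
\bias(T_1 + T_2) \geq \bias(T_1)\cdot \bias(T_2),
\]
equivalent (after taking $-\log_p$) to the subadditivity $\arank(T_1 + T_2) \leq \arank(T_1) + \arank(T_2)$. With this in hand, decomposing $T = \sum_{i=1}^r L_i$ as a sum of $r = \prank(T)$ partition-rank-one tensors and iterating yields
\[
\bias(T) \geq \prod_{i=1}^r \bias(L_i) \geq p^{-r},
\]
which is precisely $\arank(T) \leq \prank(T)$.

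The main obstacle is the submultiplicative estimate. A naive Cauchy--Schwarz on $\bias(T_1 + T_2) = \mathbb{E}_x \chi(T_1(x))\chi(T_2(x))$ only gives an upper bound, so something more delicate is required; the slices $\phi_{T_1}(y)$ and $\phi_{T_2}(y)$ are correlated because they are functions of the same $y = (x_2, \ldots, x_d)$, so one cannot simply multiply their marginal vanishing probabilities. The arguments in \cite{kazhdan2018appoximatecohomology, lovett2019} instead exploit the slice characterization together with a carefully chosen orthogonality over an auxiliary scalar coordinate to transfer the needed correlation information between $T_1$ and $T_2$ and extract the multiplicative lower bound with the correct constant.
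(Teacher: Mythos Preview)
The paper does not prove this proposition; it is stated with citations to \cite{kazhdan2018appoximatecohomology} and \cite{lovett2019} and used as a black box, so there is no in-paper argument to compare against.

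On the merits of your outline: the reduction to $\bias(T) \geq p^{-\prank(T)}$, the slice identity $\bias(T) = \Pr_{x_2,\ldots,x_d}[T(\cdot, x_2, \ldots, x_d) \equiv 0]$, and the base case $\bias(L) \geq 1/p$ for partition-rank-one $L$ are all correct and match the structure of Lovett's argument. The gap is that you do not actually prove the central inequality $\bias(T_1 + T_2) \geq \bias(T_1)\,\bias(T_2)$; you flag it as the ``main obstacle'' and then defer to the references with only a one-sentence gesture at the method. Since the base case and the telescoping are immediate once subadditivity is in hand, that inequality \emph{is} the entire content of the proposition, and what you have written is a proof plan rather than a proof. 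If your aim was merely to match the paper's level of detail, citing out is exactly what the paper does; if your aim was a self-contained argument, the subadditivity of analytic rank still needs to be established.
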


 It is an open problem to obtain linear bounds in the other direction, and the exponent of $1 + \epsilon$ in the log factor of the expression from Theorem $\ref{thm:Big Theorem}$ follows directly from the corresponding bound between partition and analytic rank. If the best bound between partition and analytic rank is improved, our result also improves without additional input. Furthermore, should the linear relationship between analytic and partition rank be proven, we would get bounds for $k \geq 4$ with linear dependence on the logarithm in the lower order term, similar to that of \cite{dan} for $k = 3$. Nonetheless, the following is state-of-the-art at the time of writing of this paper.

 \begin{theorem} [Relationship between partition and analytic rank \cite{MZ}] \label{thm: prank arank bound}
     For any $d \geq 2$, there exists $\alpha_d \geq 1$ and a function $\epsilon_d:\mathbb{R}^+\to \mathbb{R}^+$ with $\lim_{x\to \infty} \epsilon_d(x) = 0$ such that for any nonzero $d$-tensor $T$,
     \begin{equation}
         \prank{T} \leq \alpha_d \cdot \arank{T}(\log(1 + \arank{T})+1) \leq \alpha_d(\arank{T})^{1 + \epsilon(\arank{T})}.
     \end{equation}
    
 \end{theorem}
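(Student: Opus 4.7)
The plan is to proceed by induction on $d$. The base case $d = 2$ reduces to the classical computation that a bilinear form $T$ over $\mathbb{F}_p$ satisfies $|\bias(T)| = p^{-\mathrm{rank}(T)}$ exactly, so $\arank(T) = \prank(T)$ for $2$-tensors and no logarithmic overhead arises. One direction of this equality is Proposition \ref{prop: arank leq prank}; the reverse is a direct Fourier calculation that diagonalizes $T$ and reduces the bias to a product of $1$-dimensional Gauss sums.

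For the inductive step, let $T$ be a $d$-tensor with $\arank(T) = r$, so $|\bias(T)| = p^{-r}$. The central tool is a slicing decomposition: write $T$ as the ensemble of $(d-1)$-tensor slices $T_v(x_1, \dots, x_{d-1}) := T(x_1, \dots, x_{d-1}, v)$ indexed by $v \in \mathbb{F}_p^n$. A standard Cauchy--Schwarz manipulation of the bias gives $\mathbb{E}_v |\bias(T_v)|^2 \geq |\bias(T)|^2 = p^{-2r}$, so a non-negligible fraction of slices satisfy $\arank(T_v) = O(r)$. By the inductive hypothesis, each such slice admits a partition-rank decomposition of length $O(r \log(1+r))$.

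The heart of the argument is then a \emph{rank synthesis} step: one must assemble the slice-by-slice decompositions into a single partition-rank decomposition of $T$ itself. My plan would be to iteratively extract a common ``outer factor'': identify an $a$-linear form $T_1$ (for some $1 \leq a \leq d-1$) appearing in the short decompositions of many slices $T_v$, and subtract the corresponding partition-rank-$1$ piece $T_1 \otimes S$ from $T$, where $S$ is a $(d-a)$-tensor assembled from the slice coefficients (with the last coordinate now ranging over $v$). The bookkeeping should be arranged so that each subtraction reduces the analytic rank of the residue by a definite amount, with the procedure terminating after $O(r \log(1+r))$ rounds.

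The hardest part I anticipate is the efficiency of this synthesis. A naive greedy extraction only gives bounds polynomial in $r$, because common outer factors across slices need not exist with high multiplicity. To obtain the $O(r \log(1+r))$ bound of Moshkovitz--Zhu, one likely needs a more careful ``bucketing'' of slices by their decomposition structure and simultaneous extraction of many partition-rank-$1$ pieces per round, so that their costs telescope into a geometric series; the logarithmic factor is precisely the price of this imperfect accounting. Controlling how the analytic rank of the residual tensor drops under these extractions --- ruling out cancellation that could artificially inflate $\arank$ --- is where I expect the real work to lie, and closing the remaining logarithmic gap to a linear bound is the open problem flagged in the paper.
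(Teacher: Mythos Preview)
The paper does not prove this statement at all: Theorem~\ref{thm: prank arank bound} is quoted from Moshkovitz--Zhu \cite{MZ} as a black box and used as input to the main argument. There is no proof in the paper to compare your proposal against.

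As for the proposal itself, it is a plan rather than a proof, and the gap is exactly where you locate it. The slicing and Cauchy--Schwarz step is fine and does yield many $(d-1)$-slices $T_v$ of analytic rank $O(r)$, to which the inductive hypothesis applies. But the ``rank synthesis'' step --- assembling short partition-rank decompositions of individual slices into a short decomposition of $T$ --- does not follow from anything you have written. There is no reason the decompositions of distinct slices $T_v$ should share common outer factors $T_1$, so the greedy extraction you describe need not make progress; and the claim that each subtraction reduces the analytic rank of the residual by a definite amount is unsupported (subtracting a partition-rank-$1$ piece can increase analytic rank). The actual arguments in this area (Janzer, Mili\'cevi\'c, Moshkovitz--Zhu) do proceed through slices/derivatives, but the synthesis is handled by a structured regularity or ``stabilization'' argument rather than by matching factors across slices. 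Your sketch identifies the correct hard point but does not supply the mechanism that overcomes it.
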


Using the notation of Theorem \ref{thm: prank arank bound}, it suffices to assume $\epsilon_d$ is nonincreasing, and for the rest of this paper, we do. When $d$ is fixed, let $\alpha = \alpha_d$, $\epsilon = \epsilon_d \leq o(1)$ be as such.

\section{Proof of Theorem \ref{thm:Big Theorem}}

We say that a tensor $T$ is symmetric if it is invariant under permutation of its inputs. Note that the space of symmetric $d$-tensors has dimension $\binom{n+d-1}{d}$. Let $\phi_d$ denote the degree $d$ Veronese map $\F_p^n \to \F_p^{\binom{n+d-1}{d}}$. Then we may view a symmetric tensor $T$ as acting linearly on the image of $\phi_d$, and in particular introduce the inner product $\langle \cdot , \cdot \rangle$ and vector 
\[v_T\in \left( \F_p^{\binom{n+d-1}{d}}\right)^{\ast}\]
by $T(x,\ldots, x) = \langle v_T, \phi_d(x) \rangle$. 

In what follows we will be interested in $d=k-1$ tensors, corresponding to $k$APs. We state the upcoming lemmas in terms of the variable $k$ to highlight the dependence on the length of the arithmetic progression, and then afterwards pass to the variable $d$ for brevity in computation. 

We begin by recording the following lemma of \cite{dan}, which was also used in \cite{briet}. The proof is linear algebra and we omit the details.

\begin{lemma}[{\cite[Lemma 3.3]{dan}}]
Let $k \geq 3$ be an integer, $p \geq k$ prime. Let $S \subset \mathbb{F}_p^n$ be such that the set $\phi_{k - 1}(S)$ is linearly independent. Then there exists a nonzero symmetric $(k-1)$-tensor $T$ such that the set $\{ x \in \mathbb{F}_p^n: T(x,\ldots,x) = 0 \}$ contains no $k$-term arithmetic progressions with common difference in $S$. 
\end{lemma}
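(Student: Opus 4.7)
The plan is to associate to each symmetric $(k-1)$-tensor $T$ the homogeneous polynomial $P_T(x) := T(x,\ldots,x)$ of degree $k-1$, and to search for $T$ such that $P_T(d) \neq 0$ for every $d \in S$. The point is that this single pointwise condition is already enough to rule out $k$-APs in $\{P_T=0\}$: if some $k$-AP $(x, x+d, \ldots, x+(k-1)d)$ with $d \in S$ were entirely contained in the zero set of $P_T$, then the univariate polynomial $Q(t) := P_T(x+td) \in \mathbb{F}_p[t]$ would have degree at most $k-1$ and vanish at the $k$ values $t=0,1,\ldots,k-1$, which are distinct in $\mathbb{F}_p$ since $p \geq k$. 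Hence $Q$ is the zero polynomial in $t$. Expanding $P_T(x+td)$ by multilinearity and symmetry,
\[
P_T(x+td) \;=\; \sum_{j=0}^{k-1}\binom{k-1}{j}\,T(\underbrace{x,\ldots,x}_{k-1-j},\,\underbrace{d,\ldots,d}_{j})\,t^{j},
\]
so the $t^{k-1}$ coefficient, namely $T(d,\ldots,d) = P_T(d)$, must vanish, contradicting our choice of $T$.

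It remains to produce such a $T$. Using the Veronese notation from the preamble, $P_T(d) = \langle v_T, \phi_{k-1}(d)\rangle$, so the task reduces to exhibiting a linear functional $v_T$ on $\mathbb{F}_p^{\binom{n+k-2}{k-1}}$ that is nonzero at every $\phi_{k-1}(d)$ for $d \in S$. By hypothesis the set $\{\phi_{k-1}(d) : d \in S\}$ is linearly independent, so we may extend it to a basis of $\mathbb{F}_p^{\binom{n+k-2}{k-1}}$ and define $v_T$ by setting $\langle v_T, \phi_{k-1}(d)\rangle = 1$ for all $d \in S$ and arbitrarily on the remaining basis vectors. The resulting $v_T$ is nonzero, so the corresponding symmetric tensor $T$ is nonzero, and by construction $P_T$ is nonvanishing on $S$, completing the proof.

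There is no substantial obstacle here; the argument is just the finite-difference observation combined with a routine linear-algebra extension, which is presumably why the authors omit the details. The one point to track carefully is the use of $p \geq k$, which is exactly what lets us interpret $0,1,\ldots,k-1$ as distinct elements of $\mathbb{F}_p$ and deduce that $Q$, a polynomial of degree at most $k-1$ vanishing at $k$ distinct points, is identically zero; without this the expansion step fails and one cannot extract the vanishing of $T(d,\ldots,d)$.
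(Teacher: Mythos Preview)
Your proof is correct and is precisely the standard argument. The paper does not actually prove this lemma: it cites \cite[Lemma~3.3]{dan} and writes ``The proof is linear algebra and we omit the details,'' so there is nothing further to compare against; your write-up supplies exactly those omitted details, including the correct identification of where the hypothesis $p \geq k$ enters (both to ensure $0,1,\ldots,k-1$ are distinct in $\mathbb{F}_p$ and, implicitly, to make the correspondence $T \leftrightarrow v_T$ a bijection between symmetric $(k-1)$-tensors and linear functionals on $\mathbb{F}_p^{\binom{n+k-2}{k-1}}$).
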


Furthermore, it is a standard fact (which follows from the Chevalley--Warning theorem) that the set of $x$ such that $T(x,\ldots, x)=0$ has size $\Omega_{p,k}(p^n)$. Therefore, to prove Theorem \ref{thm:Big Theorem}, it suffices to show that with high probability, $S$ is such that $\phi_{k-1}(S)$ is linearly independent. To this end, the following suffices.

\begin{lemma} \label{lem:big lemma}
For every integer $k \geq 3$, prime $p \geq k$, there is $\gamma_k \geq 0$, $\epsilon: \mathbb{R}^+ \to \mathbb{R}^+$ such that $\epsilon \leq o(1)$ and the following holds. Let \[s \leq \binom{n + k - 2}{k - 1} - \gamma_k(\log_pn)^{1 + \epsilon(n)}n^{k - 2}\] be a positive integer. Let $x_1, \dots, x_s$ be independent and uniformly distributed random vectors in $\mathbb{F}_p^n$. Then $\phi_{k-1}(x_1), \dots, \phi_{k-1}(x_s)$ are linearly independent with probability $1 - o_{n\to \infty}(1)$.
\end{lemma}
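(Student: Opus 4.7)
The plan is a sequential union bound. For $i=1,\ldots,s$, set $V_{i-1} := \Span\{\phi_{k-1}(x_1),\ldots,\phi_{k-1}(x_{i-1})\}$; the vectors $\phi_{k-1}(x_1),\ldots,\phi_{k-1}(x_s)$ fail to be linearly independent exactly when some $\phi_{k-1}(x_i) \in V_{i-1}$. Since $s \leq N := \binom{n+k-2}{k-1}=O(n^{k-1})$, it suffices to show the conditional probability of this bad event is $o(n^{-(k-1)})$, uniformly in $x_1,\ldots,x_{i-1}$. Writing $d:=k-1$, once $x_1,\ldots,x_{i-1}$ are fixed, $V_{i-1}\subseteq\F_p^{N}$ is a deterministic subspace with $\dim V_{i-1}\leq s-1$, and its annihilator $W := V_{i-1}^{\perp}$, viewed through $T\mapsto v_T$ as a subspace of symmetric $d$-tensors, has dimension at least $N-s+1 \geq \gamma_k(\log_p n)^{1+\epsilon(n)}n^{d-1}$.

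We Fourier-expand the indicator of $V_{i-1}$ via $\mathbf{1}[v\in V_{i-1}]=|W|^{-1}\sum_{T\in W}\chi(\langle v_T,v\rangle)$ to write
\begin{equation*}
\mathbb{P}\bigl[\phi_{k-1}(x_i)\in V_{i-1}\,\big|\,x_1,\ldots,x_{i-1}\bigr] \;=\; \frac{1}{|W|}\sum_{T\in W}\mathbb{E}_x \chi\bigl(T(x,\ldots,x)\bigr),
\end{equation*}
and split $W = W_{\mathrm{low}}\sqcup W_{\mathrm{high}}$ according to whether $\prank(T)<r_0$ or $\prank(T)\geq r_0$, for a threshold $r_0$ to be chosen. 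For $T\in W_{\mathrm{low}}$ we pair the trivial bound $|\mathbb{E}_x\chi(\cdot)|\leq 1$ with Lemma~\ref{lem:low prank bound} to get $|W_{\mathrm{low}}|\leq p^{2n^{d-1}r_0}$. For $T\in W_{\mathrm{high}}$, Theorem~\ref{thm: prank arank bound} yields $\arank(T)\geq r_0/\bigl(\alpha(\log(1+r_0)+1)\bigr)$, and a standard exponential-sum estimate ($U^d$-norm monotonicity combined with the polarization identity $\Delta_{h_1}\cdots\Delta_{h_d}T(x,\ldots,x)=d!\,T(h_1,\ldots,h_d)$, which is lossless because $p\geq k$ ensures $d!\not\equiv 0\pmod p$) gives $|\mathbb{E}_x\chi(T(x,\ldots,x))|\leq p^{-\arank(T)/2^d}$.

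Choosing $r_0:=\tfrac{1}{4}\gamma_k(\log_p n)^{1+\epsilon(n)}$ balances the two contributions:
\begin{equation*}
\mathbb{P}\bigl[\phi_{k-1}(x_i)\in V_{i-1}\,\big|\,\cdots\bigr] \;\leq\; p^{2n^{d-1}r_0-\dim W}+p^{-r_0/(2^d\alpha(\log(1+r_0)+1))}.
\end{equation*}
For $\gamma_k$ chosen sufficiently large, the first term is $p^{-\Omega(\dim W)}$, hence super-polynomially small, while the second term is $p^{-\Omega((\log_p n)^{1+\epsilon(n)}/\log\log n)}=n^{-\omega(1)}$, comfortably $o(n^{-(k-1)})$. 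Summing over $i\leq s$ closes the proof.

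The main obstacle is the balancing of $r_0$: it must be large enough that Theorem~\ref{thm: prank arank bound} delivers an analytic rank outgrowing $\log n$ (so the exponential-sum bound handily beats $n^{-(k-1)}$), yet small enough that $2n^{d-1}r_0\ll\dim W$. The shape $(\log_p n)^{1+\epsilon(n)}$ in the lower-order term of $s$ is precisely what makes this balance feasible given the current partition-to-analytic-rank relation, and any strengthening of Theorem~\ref{thm: prank arank bound} would propagate directly through this balance to sharpen the final bound.
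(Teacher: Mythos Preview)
Your proposal is correct and follows essentially the same approach as the paper: Fourier-expand the indicator of the relevant subspace over its annihilator, split the resulting sum by rank, count low-rank tensors via Lemma~\ref{lem:low prank bound}, and control high-rank contributions via the Gowers--Wolf exponential-sum estimate combined with Theorem~\ref{thm: prank arank bound}. The differences---conditioning on $V_{i-1}$ rather than passing to the paper's fixed worst-case subspace $U$ of dimension $s-1$, splitting at a partition-rank threshold and inverting Theorem~\ref{thm: prank arank bound} rather than splitting at an analytic-rank threshold $r_0\sim\log_p n$ and applying the theorem forward, and quoting the exponential-sum bound with exponent $2^d$ rather than $2^{d-1}$---are cosmetic reparametrizations of the same argument.
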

    
The rest of this paper proves Lemma \ref{lem:big lemma}. For the rest of this section, for brevity we set $d := k - 1$, since we only deal with $(k-1)$-tensors in the proof of our main result for $k$-arithmetic progressions. Furthermore, we allow implicit constants to depend on $p$ and $d$. Select a positive integer 
\[
s \le \binom{n+d-1}{d}
  - \beta_{d}(\log_p n)^{1+\epsilon(n)}n^{d-1},
\]
where $\beta_d$ is some constant which we will choose later, and $\epsilon=\epsilon_d$ is the function from Theorem \ref{thm: prank arank bound}. 
Let $x_1, \dots, x_s \in \mathbb{F}_p^n$ be independently, uniformly distributed vectors. Let $U$ be a subspace of $\F_p^{\binom{n+d-1}{d}}$ with dimension $s-1$ and which maximally intersects $\im(\phi_{d})$ (among all subspaces of dimension $s - 1$). We record the following lemma which features as \cite[Lemma 3.4]{dan}.

    \begin{lemma}
        The probability that $\phi_d(x_1), \dots, \phi_d(x_s)$ are linearly independent is bounded below by $(1 - \mathbb{P}_{x \in \mathbb{F}_p^n}[\phi_d(x) \in U] )^s.$
    \end{lemma}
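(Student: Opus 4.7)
The plan is to argue by induction on $s$, iteratively conditioning on the previously sampled vectors. Let $\mathcal{E}_i$ denote the event that $\phi_d(x_1), \ldots, \phi_d(x_i)$ are linearly independent, and write $V_i = \Span(\phi_d(x_1), \ldots, \phi_d(x_i))$. Since $x_s$ is independent of the preceding samples,
\[
    \mathbb{P}[\mathcal{E}_s] = \mathbb{E}\bigl[\mathbf{1}[\mathcal{E}_{s-1}] \cdot \mathbb{P}_{x_s}[\phi_d(x_s) \notin V_{s-1}]\bigr].
\]
Because $\dim V_{s-1} \leq s - 1$ deterministically, if we can establish the pointwise bound
\[
    \mathbb{P}_{x \in \F_p^n}[\phi_d(x) \in V] \leq \mathbb{P}_{x \in \F_p^n}[\phi_d(x) \in U]
\]
for every subspace $V$ of dimension at most $s-1$, then the inner probability above is at least $1 - \mathbb{P}_x[\phi_d(x) \in U]$ regardless of the realization of $x_1, \ldots, x_{s-1}$, and the desired estimate $(1 - \mathbb{P}_x[\phi_d(x) \in U])^s$ follows by induction on $s$ (the base case $s = 0$ being trivial).

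The key inequality reduces to a preimage count under the Veronese map. For any subspace $V$, we have $|\phi_d^{-1}(V)| = \sum_{v \in V \cap \im(\phi_d)} |\phi_d^{-1}(v)|$. One checks directly that $\phi_d^{-1}(0) = \{0\}$, and that for any nonzero $v = \phi_d(y) \in \im(\phi_d)$, a preimage $z$ of $v$ must satisfy $z_i^d = y_i^d$ and $z_i^{d-1} z_j = y_i^{d-1} y_j$, which together force $z = \zeta y$ for some $d$th root of unity $\zeta \in \F_p$. Thus $|\phi_d^{-1}(v)| = \gcd(d, p-1)$ uniformly over nonzero $v \in \im(\phi_d)$, giving
\[
    |\phi_d^{-1}(V)| = 1 + \gcd(d, p-1)\bigl(|V \cap \im(\phi_d)| - 1\bigr),
\]
which is monotonically increasing in $|V \cap \im(\phi_d)|$.

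Consequently, maximizing $\mathbb{P}_x[\phi_d(x) \in V]$ over subspaces of dimension at most $s - 1$ is equivalent to maximizing $|V \cap \im(\phi_d)|$ over the same range; extending any such $V$ to dimension exactly $s - 1$ only enlarges the intersection, so the supremum is attained by a subspace of dimension precisely $s - 1$ whose intersection with $\im(\phi_d)$ is as large as possible. This is the defining property of $U$, yielding the pointwise inequality and completing the argument. The only mildly technical step is the fiber count; once one notes that $0$ has a singleton fiber while every other point of $\im(\phi_d)$ has a fiber of size $\gcd(d, p-1)$, the remaining bookkeeping is routine.
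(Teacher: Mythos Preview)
Your proof is correct and follows essentially the same iterated-conditioning scheme as the paper's argument: peel off $x_s$, bound $\mathbb{P}_{x}[\phi_d(x)\in V_{s-1}]$ by $\mathbb{P}_{x}[\phi_d(x)\in U]$ using the extremality of $U$, and iterate. You are more thorough than the paper in one respect: you explicitly verify, via the fiber count $|\phi_d^{-1}(v)|=\gcd(d,p-1)$ for nonzero $v\in\im(\phi_d)$, that maximizing $|U\cap\im(\phi_d)|$ is equivalent to maximizing $\mathbb{P}_x[\phi_d(x)\in U]$, a point the paper's three-line sketch leaves to the cited reference.
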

    \begin{proof}
    The probability is bounded below by
        \begin{equation}
            \mathbb{P}[x_1 \neq 0]\prod_{i = 2}^s \mathbb{P}[\phi_d(x_i) \notin \Span{} \{ \phi_d(x_1), \dots, \phi_d(x_{i-1}) \} ]
        \end{equation}
        \begin{equation}
            \geq \mathbb{P}[x_1 \neq 0] \prod_{i = 2}^s \mathbb{P}[\phi_d(x_i) \notin U]
        \end{equation}
        \begin{equation}
            \geq (1 - \mathbb{P}_{x \in \mathbb{F}_p^n}[\phi_d(x) \in U] )^s.
        \end{equation}
    \end{proof}

    We claim therefore that to show that $\phi_d(x_1), \dots, \phi_d(x_s)$ are linearly independent with high probability, it suffices to show that $\mathbb{P}_{x \in \mathbb{F}_p^n}[\phi_d(x) \in U] \leq o(n^{-d})$. Indeed,  $s \leq \binom{n + d - 1}{d} \leq O(n^d)$, and thus by the previous lemma, the probability of linear independence is bounded below by $(1 - o(n^{-d}))^{O(n^d)} = 1 - o(1)$. 
    
    We now show that $\mathbb{P}_{x \in \mathbb{F}_p^n}[\phi_d(x) \in U] \leq o(n^{-d})$. First, we will need \cite[Lemma 3.5]{dan}. This follows from the orthogonality of characters.
    \begin{lemma} \label{lem: Dan 3.5}
        Let $V$ be a subspace of the vector space of functions $\mathbb{F}_p^k \to \mathbb{F}_p$. Let $\chi$ be a nontrivial character on $\F_p$. Say $V(x) = 0$ if $v(x) = 0$ for all $v \in V$. Then
        \begin{equation}
            \mathbb{P}_x (V(x) = 0) = \mathbb{E}_{v,x} \chi(v(x)).
        \end{equation}
    \end{lemma}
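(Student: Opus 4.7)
The plan is a standard character-orthogonality computation: for each fixed $x$, I would show that the inner average $\mathbb{E}_{v \in V}\chi(v(x))$ equals the indicator of the event $V(x)=0$, and then average over $x$.

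To implement this, I would fix $x \in \mathbb{F}_p^k$ and consider the evaluation map $\mathrm{ev}_x : V \to \mathbb{F}_p$ defined by $v \mapsto v(x)$. Since $V$ is a subspace of a function space, $\mathrm{ev}_x$ is $\mathbb{F}_p$-linear, and so its image is a subspace of $\mathbb{F}_p$ and must equal either $\{0\}$ or all of $\mathbb{F}_p$. In the first case one has $V(x)=0$ by definition, $\chi(v(x))=1$ for every $v \in V$, and therefore $\mathbb{E}_{v \in V}\chi(v(x)) = 1$. In the second case $\mathrm{ev}_x$ is surjective, its fibers are cosets of $\ker(\mathrm{ev}_x)$ of equal size, and hence $v(x)$ is uniformly distributed on $\mathbb{F}_p$ as $v$ ranges uniformly over $V$; since $\chi$ is nontrivial, orthogonality of characters gives $\mathbb{E}_{v \in V}\chi(v(x)) = \mathbb{E}_{y \in \mathbb{F}_p}\chi(y) = 0$.

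Combining the two cases shows that $\mathbb{E}_{v \in V}\chi(v(x)) = \mathbf{1}[V(x)=0]$ pointwise in $x$. Taking expectation over $x$ (and swapping the order of averaging, which is legal because everything is a finite sum) then yields $\mathbb{E}_{v,x}\chi(v(x)) = \mathbb{E}_x \mathbf{1}[V(x)=0] = \mathbb{P}_x(V(x)=0)$, which is exactly the claimed identity. I do not anticipate any real obstacle here: the only substantive inputs are that an $\mathbb{F}_p$-linear map has equal-size fibers and that $\mathbb{F}_p$ has no proper nonzero subspace, so the dichotomy ``$\mathrm{ev}_x$ is either zero or surjective'' is automatic and the result is essentially a repackaging of the orthogonality relation $\mathbb{E}_{y \in \mathbb{F}_p}\chi(ay) = \mathbf{1}[a=0]$.
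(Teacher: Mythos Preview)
Your proposal is correct and is essentially the same approach as the paper, which simply remarks that the lemma follows from the orthogonality of characters and omits the details; your argument spells out exactly that computation.
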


    We also record the following lemma of Gowers and Wolf which bounds the bias of $T(x,x,\ldots, x)$ on $\F_p^n$ by the analytic rank of the tensor $T$.

    \begin{lemma}[{\cite[Lemma 3.2]{GW11}}]
        Let $\chi$ be a nontrivial additive character on $\F_p$ and $T$ a $d$-tensor on $\F_p^n$. Then
        \[\left| \mathbb{E}_{x\in \F_p^n} \chi(T(x,\ldots,x))\right| \leq p^{-\arank{T}/2^{d-1}}.\]
    \end{lemma}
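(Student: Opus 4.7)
The plan is to prove this lemma by iterated Cauchy--Schwarz (``Weyl differencing''). Since $T(x,\ldots,x)$ depends only on the symmetrization of $T$, and since $p \geq k > d$ guarantees that $d!$ is invertible in $\F_p$, it suffices to treat symmetric $T$; this is the only case applied in the paper, the tensor in question arising from the Veronese construction. Write $\lambda := \mathbb{E}_x \chi(T(x, \ldots, x))$ and $\Delta_h P(x) := P(x+h) - P(x)$.

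First I would show by induction on $j = 1, \ldots, d-1$ that
\[|\lambda|^{2^j} \leq \left|\mathbb{E}_{x, h_1, \ldots, h_j} \chi\bigl(\Delta_{h_j} \cdots \Delta_{h_1} [T(x, \ldots, x)]\bigr)\right|.\]
The base case $j = 1$ is the identity $|\lambda|^2 = \mathbb{E}_{x, h_1} \chi(T(x + h_1, \ldots, x + h_1) - T(x, \ldots, x))$, obtained by expanding the square. The inductive step combines the bound $|\mathbb{E}_h f(h)|^2 \leq \mathbb{E}_h |f(h)|^2$ (applied to $f(h) = \mathbb{E}_x \chi(P(x, h))$) with the expansion $\mathbb{E}_h |f(h)|^2 = \mathbb{E}_{h, x, h_{j+1}} \chi(\Delta_{h_{j+1}} P(x, h))$, where commutativity of shift differences assembles the required $\Delta_{h_{j+1}} \cdots \Delta_{h_1}$ on the right.

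The crux is the combinatorial identity, for symmetric $T$, that
\[\Delta_{h_{d-1}} \cdots \Delta_{h_1} [T(x, \ldots, x)] = d! \cdot T(x, h_1, \ldots, h_{d-1}) + Q(h_1, \ldots, h_{d-1})\]
with $Q$ independent of $x$. This is proved by expanding $T(x + h_1, \ldots, x + h_1)$ by multilinearity and iterating; the $d!$ coefficient of the unique linear-in-$x$ term counts ordered tuples $(S_1, \ldots, S_{d-1})$ of pairwise disjoint nonempty subsets of $[d]$ whose union has complement of size $1$, which forces $|S_j| = 1$ for all $j$ and gives $d \cdot (d-1)! = d!$ such tuples, each contributing $T(x, h_1, \ldots, h_{d-1})$ by symmetry. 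Since $d!$ is invertible mod $p$ and the $x$-average of a nontrivial character of a linear form vanishes, the inner $\mathbb{E}_x$ produces $\chi(Q(h)) \cdot \mathbf{1}[T(\cdot, h_1, \ldots, h_{d-1}) \equiv 0]$, whence
\[|\lambda|^{2^{d-1}} \leq \mathbb{P}_{h_1, \ldots, h_{d-1}}\!\bigl[T(\cdot, h_1, \ldots, h_{d-1}) \equiv 0\bigr] = \bias(T) = p^{-\arank(T)},\]
where the middle equality applies the same linear-form observation to $\bias(T)$ viewed as $\mathbb{E}_{x_2, \ldots, x_d}\mathbb{E}_{x_1} \chi(T(x_1, \ldots, x_d))$. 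Taking $2^{d-1}$-th roots yields the claim. The main obstacle is the combinatorial identity for the iterated discrete derivative; everything else is routine Cauchy--Schwarz bookkeeping.
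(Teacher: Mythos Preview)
The paper does not supply its own proof of this lemma; it simply quotes the result from Gowers--Wolf and moves on. Your sketch is exactly the standard Weyl-differencing argument that underlies the cited reference: $d-1$ rounds of Cauchy--Schwarz bound $|\lambda|^{2^{d-1}}$ by an average over $h_1,\ldots,h_{d-1}$ of $|\mathbb{E}_x\chi(\text{affine in }x)|$, which collapses to $\mathbb{P}_h[T(\cdot,h_1,\ldots,h_{d-1})\equiv 0]=\bias(T)$. The identification of the linear-in-$x$ part of the iterated derivative as $d!\,T(x,h_1,\ldots,h_{d-1})$ is correct for symmetric $T$, and your use of the invertibility of $d!$ (guaranteed here by $p\geq k>d$) is genuinely needed at that step. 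Your caveat that only the symmetric case is required is accurate: in the paper's application $T$ ranges over $U^\perp$, a subspace of symmetric tensors, so nothing is lost by this restriction.
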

    
    Combining the above two lemmas we obtain    
    \begin{align*}
        \mathbb{P}_{x \in \mathbb{F}_p^n}[\phi_d(x) \in U] &= \mathbb{P}_x [\langle v_T , \phi_d(x) \rangle = 0, \text{ for all } v_T \in U^\perp] \\
        &= \mathbb{E}_{x, v_T \in U^\perp} [\chi(\langle v_T, \phi_d(x) \rangle)]  \\ &\leq \mathbb{E}_{T \in U^\perp} [p^{-\frac{\arank{T}}{2^{d-1}}}].
    \end{align*}Thus, it suffices to show that 
   \begin{equation}
       \mathbb{E}_{T\in U^\perp} [p^{-\frac{\arank{T}}{2^{d-1}}}] = o(n^{-d}).
   \end{equation}We do so by splitting the sum by analytic rank and making use of Lemma \ref{lem:low prank bound} to show that there are few tensors with small analytic rank. Let $r_0$ be a parameter which we will choose shortly and let $r$ be such that Theorem \ref{thm: prank arank bound} yields that $\arank{T} \leq r_0$ implies $\prank{T} \leq r$ (so we may take $r=\alpha_d r_0^{1+\epsilon(n)}$).  

    \begin{align*}
    \sum_{T \in U^\perp}p^{-\frac{\arank T}{2^{d-1}}}
    &= \sum_{\arank T \le r_0 \text{ or } \arank T > r_0}
       p^{-\frac{\arank T}{2^{d-1}}} \\[6pt]
    &\le \sum_{\prank T \le r}
       p^{-\frac{\arank T}{2^{d-1}}}
       + \sum_{\arank T > r_0}
       p^{-\frac{\arank T}{2^{d-1}}} \\[6pt]
    &\le \bigl|\{T:\prank T \le r\}\bigr|
       + p^{-\frac{r_0}{2^{d-1}}}\,
         \bigl|\{T\in U^\perp:\arank T > r_0\}\bigr| \\[6pt]
    &\le p^{2n^{d-1}r}
       + p^{\dim U^\perp}\ \cdot p^{-\frac{r_0}{2^{d-1}}},\
    \end{align*}where we use Lemma \ref{lem:low prank bound} in the final line. Dividing by $p^{\dim U^\perp}$ we obtain that  
    \begin{equation}
        \mathbb{E}_{T\in U^\perp} [p^{-\frac{\arank{T}}{2^{d-1}}}]
        \leq 
        p^{2n^{d-1}r - \dim U^\perp} + p^{-\frac{r_0}{2^{d-1}}}.
    \end{equation}We consider the two terms separately. Setting 
    \[ r_0 = (d2^{d-1}+1)\log_p n,\]
    we ensure that the second term is of size $o(n^{-d})$. For the first term, we compute that the exponent of $p$ is 
\begin{equation}
        2n^{d-1}r - \dim U^\perp \leq
        2n^{d-1}\alpha_d r_0^{1+\epsilon(n)} - \beta_d (\log_p n)^{1 + \epsilon(n)}
        n^{d-1}.
\end{equation}Therefore, setting $\beta_d$ suitably large we may ensure that this exponent is bounded above by $-n^{d-1}(\log_p n)^{1+\epsilon(n)}$ (say), which certainly ensures that the first term is bounded above by $o(n^{-d})$.

Thus, we conclude that $\mathbb{E}_{T\in U^\perp}[p^{-\frac{\arank T}{2^{d-1}}}] \leq o(n^{-d})$, as desired. This completes the proof of Theorem \ref{thm:Big Theorem}.

\begingroup
\large
\bibliographystyle{amsalpha}
\AtBeginEnvironment{bibliography.bib}{\large}
\bibliography{bibliography.bib}
\endgroup

\end{document}